\theoremstyle{plain}
\newtheorem{thm}{Theorem}[section]
\newtheorem{theorem}[thm]{Theorem}
\newtheorem{proposition}[thm]{Proposition}
\theoremstyle{definition}
\newtheorem{definition}[thm]{Definition}
\newtheorem{remark}[thm]{Remark}
\newtheorem{example}[thm]{Example}
\newtheorem{problem}[thm]{Problem}
\newtheorem{thevarthm}[thm]{\varthmname}
\newenvironment{varthm*}[1]{\trivlist\item[]{\bf #1.}\it}{\endtrivlist}
\def\keywordname{{\bfseries Keywords}}%
\def\keywords#1{\par\addvspace\medskipamount{\rightskip=0pt plus1cm
\def\and{\ifhmode\unskip\nobreak\fi\ $\cdot$
}\noindent\keywordname\enspace\ignorespaces#1\par}}
\def\subclassname{{\bfseries Mathematics Subject Classification
(2020)}\enspace}
\def\subclass#1{\par\addvspace\medskipamount{\rightskip=0pt plus1cm
\def\and{\ifhmode\unskip\nobreak\fi\ $\cdot$
}\noindent\subclassname\ignorespaces#1\par}}
\begin{document}
\title{On plus--one generated arrangements of plane conics}
\author{Artur Bromboszcz, Bartosz Jaros\l awski, Piotr Pokora}
\date{\today}
\maketitle
\thispagestyle{empty}
\begin{abstract}
In this paper, we examine the combinatorial properties of conic arrangements in the complex projective plane that possess certain quasi--homogeneous singularities. First, we introduce a new tool that enables us to characterize the property of being plus--one generated within the class of conic arrangements with some naturally chosen  quasi--homogeneous singularities. Next, we present a classification result on plus--one generated conic arrangements admitting only nodes and tacnodes as singularities. Building on results regarding conic arrangements with nodes and tacnodes, we present new examples of strong Ziegler pairs of conic-line arrangements -- that is, arrangements having the same strong combinatorics but distinct derivation modules.
\keywords{14N25, 14H50, 32S25}
\subclass{plus--one generated curves, conic-line arrangements, plane curve singularities, weak-combinatorics, strong combinatorics}
\end{abstract}
\section{Introduction}
In recent years, researchers have shown a growing interest in studying plus--one generated arrangements of curves. Recall that the plus--one generation is a homological property of the derivation module associated with arrangements of plane curves in the complex projective plane very close to being free. A reduced complex plane curve is free if the associated derivation module preserving the arrangement is a free module over the coordinate ring of the complex projective plane. The property of being plus--one generated relaxes the freeness condition. In the class of plus--one generated curves, new phenomena emerge that we want to understand. The concept of plus--one generatedness was introduced by Abe \cite{Abe18} in the setting of central hyperplane arrangements, and then it was generalized to any reduced plane curve by Dimca and Sticlaru \cite{DS20}. Let us notice here that plus--one generated hyperplane arrangements are important due to many factors, for instance they are sitting closely to the class of free arrangements via the addition/deletion technique. Here our main goal is to continue studies on plus--one generated arrangements of conics that were begun in papers by several authors, for instance in \cite{her, DJP, DS25, MP, MP1}. Let us present a concise outline of our work.

First of all, we focus on combinatorial constraints on the existence of certain conic arrangements admitting some naturally chosen quasi--homogeneous singularities. Our first result is a Poincar\'e-type polynomial formula for such arrangements that is inspired by a very recent paper by the third author \cite{Pok}. More precisely, if $\mathcal{C} : f=0$ is an arrangement of $k\geq 2$ conics in the complex projective plane that admits $n_r$ ordinary quasi--homogeneous $r$-tuple points with $r\geq 2$, $t_k$ singularities of type $A_k$ with $k \in \{3,5,7\}$, and $j$ singularities of type $J_{2,0}$, then we can define the Poincar\'e-type polynomial associated with $\mathcal{C}$ as
$$\mathfrak{P}(\mathcal{C}, d_3; t):= 1+ 2k t+\left(\sum_{r\geq2}{(r-1)n_r}+t_3+t_5+t_7+2j+2k-d_3\right)t^2,$$
where $d_{3}$ denotes the degree of the third syzygy of the Jacobian ideal $J_f$ with respect to the degree order. Our Theorem \ref{mmm} provides a Poincar\'e-type polynomial identity decoding the combinatorics of some plus--one generated conic arrangements together with the homological properties of the Jacobian ideal $J_{f}$. We show how to use this tool in practice by presenting instructive examples.

Next, we present a tool that might be called a Hirzebruch-type inequality (see Theorem \ref{hirz}) that provides us a concise formula restricting possible combinatorics of a certain class of conic arrangements, namely if $\mathcal{C} \subset \mathbb{P}^{2}_{\mathbb{C}}$ is an arrangement of $k\geq 4$ smooth conics admitting $n_{r}$ ordinary intersection points with $r \in \{2,3,4\}$, $t_{k}$ singularities of type $A_{k}$ with $k \in \{3,5,7\}$, and $j$ singularities of type $J_{2,0}$, then one has
$$8k + n_{2} + \frac{3}{4}n_{3} \geq \frac{5}{2}t_{3} + 5t_{5} + \frac{29}{4}t_{7} + 6j.$$  

Then we perform a classification of plus--one generated conic arrangements with nodes and tacnodes as singularities, and our main result into this direction is Theorem \ref{42}. During our studies on this problem, we discovered some serious computational obstacles. For this reason, we propose two challenging research problems that interested readers might want to tackle. In the last part of the paper, we study examples of strong Ziegler pairs of conic-line arrangements with only nodes and tacnodes as singularities, and we show the existence of such a pair in degree $8$, see Proposition \ref{zzzz}. This observation is quite remarkable, as this is the simplest Ziegler pair of conics and lines, singularity type-wise, according to our knowledge.

We work over the complex numbers in the projective setting and all symbolic computations are performed using \verb}SINGULAR} \cite{Singular}.

\section{Preliminaries}

We start with algebraic preliminaries. We follow the notation introduced in the recent book by Dimca \cite{Dimca}. Let us denote by $S := \mathbb{C}[x,y,z]$ the coordinate ring of $\mathbb{P}^{2}_{\mathbb{C}}$. Let $C$ be a reduced plane curve defined by a homogeneous polynomial $f$ of degree $d$. We denote by $J_{f}$ the Jacobian ideal associated with $f$, i.e., the ideal $J_{f} = \langle \partial_{x}\, f, \partial_{y} \, f, \partial_{z} \, f \rangle$.
\begin{definition}
Let $p$ be an isolated singularity of a polynomial $f\in \mathbb{C}[x,y]$. Since we can change the local coordinates, assume that $p=(0,0)$.
\begin{itemize}
    
\item The number 
$$\mu_{p}=\dim_\mathbb{C}\left(\mathbb{C}\{x,y\} /\bigg\langle \partial_{x}\, f ,\partial_{y}\, f \bigg\rangle\right)$$
is called \textbf{the Milnor number} of $f$ at $p$.

\item The number
$$\tau_{p}=\dim_\mathbb{C}\left(\mathbb{C}\{x,y\}/\bigg\langle f, \partial_{x}\, f ,\partial_{y} \, f \bigg\rangle \right)$$
is called \textbf{the Tjurina number} of $f$ at $p$.

\end{itemize}
\end{definition}

\textbf{The total Tjurina number} of a given reduced curve $C \subset \mathbb{P}^{2}_{\mathbb{C}}$ is defined as
$${\rm deg}(J_{f}) = \tau(C) = \sum_{p \in {\rm Sing}(C)} \tau_{p}.$$ 

Recall that a singularity is called quasi-homogeneous if and only if there exists a holomorphic change of variables so that the defining equation becomes weighted homogeneous. If $C : f=0$ is a reduced plane curve with only quasi-homogeneous singularities, then one has $\tau_{p}=\mu_{p}$ for all $p \in {\rm Sing}(C)$, and eventually 
$$\tau(C) = \sum_{p \in {\rm Sing}(C)} \mu_{p} = \mu(C),$$
which means that the total Tjurina number is equal to the total Milnor number of $C$. In our considerations, we will need also the following numerical invariant of curves.
\begin{definition}
Let $C \subset \mathbb{P}^{2}_{\mathbb{C}}$ be a reduced curve. \textbf{The Arnold exponent} of $C$ is defined as
$$\alpha_{C} =  \min_{p \in {\rm Sing}(C)} {\rm lct}_{p}(C),$$
where  ${\rm lct}_{p}(C)$ denotes the log--canonical threshold of $p \in {\rm Sing}(C)$.
\end{definition}
\begin{remark}
It is well-known that if $p \in C$ is an ordinary singularity of multiplicity $r$, then $${\rm lct}_{p}(C) = \frac{2}{r}.$$
If now $p \in C$ is a tacnode, i.e., an $A_{3}$ singularity, then ${\rm lct}_{p}(C) = 3/4$. For more information on the Arnold exponent, please consult \cite{DimSer}.
\end{remark}

Next, we will need an important invariant that is defined using the syzygies associated with the Jacobian ideal $J_{f}$.
\begin{definition}
Consider the graded $S$-module of Jacobian syzygies of $f$, namely $$AR(f)=\{(a,b,c)\in S^3 : a\partial_{x} \, f + b \partial_{y} \, f + c \partial_{z} \, f = 0 \}.$$
\textbf{The minimal degree of non-trivial Jacobian relations} for $f$ is defined to be 
$${\rm mdr}(f):=\min_{r\geq 0}\{AR(f)_r\neq 0\}.$$ 
\end{definition}
For one of the main definitions in our paper, we recall \textbf{the Milnor algebra}, which is defined as follows $M(f) := S / J_{f}$.
\begin{definition}
We say that a reduced plane curve $C$ of degree $d$ is an \textbf{$m$-syzygy} curve when the associated Milnor algebra $M(f)$ has the following minimal graded free resolution:
$$0 \rightarrow \bigoplus_{i=1}^{m-2}S(-e_{i}) \rightarrow \bigoplus_{i=1}^{m}S(1-d - d_{i}) \rightarrow S^{3}(1-d)\rightarrow S \rightarrow M(f) \rightarrow 0$$
with $e_{1} \leq e_{2} \leq \ldots \leq e_{m-2}$ and $1\leq d_{1} \leq \ldots \leq d_{m}$. The $m$-tuple $(d_{1}, \ldots , d_{m})$ is called the exponents of $C$. Moreover, in this setting ${\rm mdr}(f) = d_{1}$.
\end{definition}
\begin{definition}
A reduced curve $C$ in $\mathbb{P}^{2}_{\mathbb{C}}$ is called \textbf{plus--one generated} with the exponents $(d_1,d_2, d_3)$ if $C$ is $3$-syzygy such that $d_{1}+d_{2}=d$. 
\end{definition}
In order to study plus--one generated reduced curves we will use the following characterization that comes from \cite{DS20}.
\begin{proposition}[Dimca-Sticlaru]
\label{dimspl}
Let $C: f=0$ be a reduced $3$-syzygy curve of degree $d\geq 3$ with the exponents $(d_{1},d_{2},d_{3})$. Then $C$ is plus--one generated if and only if
$$\tau(C) = (d-1)^{2} - d_{1}(d-d_{1}-1) - (d_{3}-d_{2}+1).$$
\end{proposition}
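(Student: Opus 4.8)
The plan is to compute the total Tjurina number $\tau(C)=\deg(J_f)$ of a $3$-syzygy curve directly from its minimal graded free resolution, and then to extract the plus--one condition $d_1+d_2=d$ from the resulting identity. First I would write down the resolution of the Milnor algebra,
$$0 \longrightarrow S(-e_{1}) \longrightarrow \bigoplus_{i=1}^{3}S(1-d-d_{i}) \longrightarrow S^{3}(1-d)\longrightarrow S \longrightarrow M(f) \longrightarrow 0,$$
and read off the Hilbert series
$$H(M(f),t)=\frac{1-3\,t^{\,d-1}+\sum_{i=1}^{3}t^{\,d-1+d_{i}}-t^{\,e_{1}}}{(1-t)^{3}}.$$
The curve $C$ is necessarily singular (a smooth plane curve is free), so $V(J_f)\subset\mathbb{A}^{3}$ is one--dimensional, being the cone over the nonempty singular subscheme of $C$; hence $M(f)$ has Krull dimension one and its Hilbert polynomial equals the constant $\tau(C)$. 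Consequently the numerator $N(t)$ is divisible by $(1-t)^{2}$: here $N(1)=0$ is automatic, while $N'(1)=0$ forces the relation $e_{1}=d_{1}+d_{2}+d_{3}$, and then $\tau(C)=\tfrac12 N''(1)$. A short computation of $N''(1)$ then yields
$$\tau(C)=(d-1)(d_{1}+d_{2}+d_{3})-\bigl(d_{1}d_{2}+d_{1}d_{3}+d_{2}d_{3}\bigr),$$
an identity valid for \emph{every} $3$-syzygy curve.

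Next I would compare this with the formula in the statement. A direct computation shows that its right--hand side differs from $(d-1)(d_{1}+d_{2}+d_{3})-(d_{1}d_{2}+d_{1}d_{3}+d_{2}d_{3})$ exactly by $(d_{1}+d_{2}-d)\,(d-2-d_{1}-d_{3})$. Combined with the $\tau$-identity above, this shows that the asserted equality is equivalent to ``$d_{1}+d_{2}=d$ or $d_{1}+d_{3}=d-2$''. The first alternative is precisely the definition of plus--one generation, which already settles the ``only if'' direction and one implication of ``if''.

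To complete the ``if'' direction I would discard the spurious alternative $d_{1}+d_{3}=d-2$ by invoking the a priori inequality $d_{1}+d_{2}\ge d$, valid for any $3$-syzygy curve (it follows from the minimality of the resolution above together with $e_{1}=d_{1}+d_{2}+d_{3}$; see \cite{Dimca,DS20}). Since $d_{2}\le d_{3}$, the equality $d_{1}+d_{3}=d-2$ would force $d_{1}+d_{2}\le d_{1}+d_{3}=d-2<d$, a contradiction. Hence the asserted equality holds if and only if $d_{1}+d_{2}=d$, i.e.\ if and only if $C$ is plus--one generated.

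The step I expect to require the most care is the passage through the Hilbert series, i.e.\ checking that $M(f)$ is one--dimensional with Hilbert polynomial equal to $\deg(J_f)=\tau(C)$ so that reading off $\tfrac12 N''(1)$ is legitimate, together with having the a priori bound $d_{1}+d_{2}\ge d$ available; the latter is exactly what excludes the parasitic root $d_{1}+d_{3}=d-2$, and it is genuinely needed — for instance the triple $(d_{1},d_{2},d_{3})=(1,1,1)$ with $d=4$ satisfies the Tjurina identity yet is not a legitimate $3$-syzygy exponent triple. The rest is routine: one differentiation of $N(t)$ and a one--line polynomial factorisation.
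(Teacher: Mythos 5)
The paper does not prove this proposition; it is quoted verbatim from \cite{DS20}, so there is no internal argument to compare against. Your proof is correct and is essentially the standard one: reading $\tau(C)=\tfrac12 N''(1)$ off the Hilbert series of the length-three resolution (using that $M(f)$ has dimension one with Hilbert polynomial $\deg J_f=\tau(C)$, and that $N'(1)=0$ forces $e_1=d_1+d_2+d_3$) gives the universal identity $\tau(C)=(d-1)(d_1+d_2+d_3)-(d_1d_2+d_1d_3+d_2d_3)$, and the stated formula is then equivalent to the vanishing of a product of two linear factors, one of which is $d_1+d_2-d$. Your exclusion of the parasitic factor via the a priori bound $d_1+d_2\ge d$ (a consequence of minimality: the entry of the last syzygy matrix paired with the generator of degree $d_3$ has degree $d_1+d_2-d+1$, which must be positive) is exactly the right point, and your $(1,1,1)$, $d=4$ example correctly shows it cannot be skipped. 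One small slip: the difference of the two expressions for $\tau(C)$ is $(d_1+d_2-d)(d_1+d_3-d+2)$, i.e.\ the negative of the factorisation you wrote; since only the vanishing locus matters, this does not affect the argument, but the sign should be fixed.
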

\noindent
\begin{definition}
The \textbf{defect} $\nu(C)$ of a plus--one generated reduced plane curve $C \subset \mathbb{P}^{2}_{\mathbb{C}}$ is defined as
$$\nu(C) = d_{3}-d_{2}+1.$$
\end{definition}
\begin{definition}
The \textbf{$\delta$-level} of a plus--one generated reduced plane curve $C \subset \mathbb{P}^{2}_{\mathbb{C}}$ is defined as
$$\delta L(C) = d_{3} - d_{2} = \nu(C)-1 \geq 0.$$ 
\end{definition}

\begin{definition}
A plus--one generated plane curve $C$ satisfying $\delta L(C)=1$ is called \textbf{minimal plus--one generated}.
\end{definition}
\begin{remark}
In the situation when $\delta L(C) = 0$ we call $C$ a \textbf{nearly free} curve.
\end{remark}
\begin{remark}
\label{cruci}
It is worth recalling also that,  according to \cite[Corollary 5.2]{DS20}, if $C = \bigcup_{i=1}^{k}C_{i}$ is a reduced plane curve of degree $d\geq 3$ such that all irreducible components $C_{i}$ of $C$ are rational, then $d_{m}\leq d-1$. We will use this fact, for instance, in Examples \ref{ex33} and \ref{ex35} below.
\end{remark}
Now we pass to combinatorial preliminaries since we would like to define two notions of combinatorics that can be attached to some reduced plane curves. In the following definition, we refer to types of singularities that are determined by their local normal forms and we will follow Arnold's classification of local normal forms presented in \cite{arnold}. In the setting of our paper, we allow our conic arrangements to have ordinary quasi--homogeneous singularities, which are locally described by $x^{r} + y^{r}=0$ for $r \geq 2$, tangential intersection points $A_{2t+1}$ with $t \in \{1,2,3\}$, and singularities $J_{2,0}$ which have the following local normal form $x^3 + bx^{2}y^{2}+y^{6} = 0$  with $4b^{3}+27 \neq 0$. Finally, let us recall their Milnor numbers (and, due to their quasi-homogeneity, these are also the Tjurina numbers).
\begin{table}[h!]
    \centering
    \begin{tabular}{c|c|c}
     & Number of &  \\
     Singularity & occurrences & $\mu_{p}$ \\ \hline
    ordinary $r$-fold & $n_r$ & $(r-1)^2$  \\
    $A_{3}$ & $t_3$ & 3  \\
    $A_{5}$ & $t_5$ & 5  \\
    $A_{7}$ & $t_7$ & 7  \\
    $J_{2,0}$ & $j$ & 10
    \end{tabular}
    \caption {Milnor numbers of the admissible singular points.}
    \label{table: ADES}
\end{table}

Now we are going to define the combinatorial type of a given curve, and our definition is in the spirit of \cite{cog}.
\begin{definition}
Let $C \subset \mathbb{P}^{2}_\mathbb{C}$ be a reduced curve. Then the \textbf{combinatorial type} of $C$ is defined as
$$W_{C} = (\textbf{i}, \bar{d}, {\rm Sing}(C), \Sigma, \delta, \iota, \textbf{r}),$$
where 
\begin{itemize}
\item the elements of $\textbf{i}$, called the indices, are in bijection with the irreducible components of $C$,
\item $\bar{d} : \textbf{i} \rightarrow \mathbb{N}$ is the degree map that assigns to each irreducible component of $C$ its degree,
\item ${\rm Sing}(C)$ is the set of all singular points of $C$,
\item $\Sigma$ is the set of topological types of the points in $S$,
\item $\delta : {\rm Sing}(C) \rightarrow \Sigma$ assigns to each singular point its topological type\footnote{Recall that the topological type of a singularity $f : (\mathbb{C}^{2}, 0) \to (\mathbb{C}, 0)$ is described by the
(local) embedding of the variety $f^{-1}(0)$ in a neighbourhood of the singular point $0 \in \mathbb{C}^{2}$.},
\item $\iota$ assigns to each singular point $p \in {\rm Sing}(C)$ the irreducible components passing through this point,
\item $\textbf{r} = (r_{j})_{j}$ is the sequence of non-negative integers, where

$r_{j}$ = the number of irreducible components of $C$ containing exactly $j$ singular points.
\end{itemize}
\end{definition}
\begin{definition}
We say that two reduced plane curves $C_{1}$ and $C_{2}$ have the same combinatorial data if the combinatorial types $W_{C_{1}}$ and $W_{C_{2}}$ are equivalent meaning that $\Sigma_{1} = \Sigma_{2}$, $\textbf{r}_{1} = \textbf{r}_{2}$, and there are two bijections $\phi_{r} : \textbf{i}_{1} \rightarrow \textbf{i}_{2}$, $\phi_{S} : {\rm Sing}(C_{1}) \rightarrow {\rm Sing}(C_{2})$ such that $\iota_{1}$ and $\iota_{2}$ are equal, meaning that for all singular points $p$ one has $\iota_{1}(p) = \iota_{2}(\phi_{S}(p))$, $\bar{d}_{1} = \bar{d}_{2} \circ \phi_{r}$, and finally $\delta_{1} = \delta_{2} \circ \phi_{S}$.
\end{definition}
Finally, we come up with a less rigorous definition of combinatorics.
\begin{definition}
Let $C = \{C_{1}, \ldots, C_{k}\} \subset \mathbb{P}^{2}_{\mathbb{C}}$ be a reduced curve such that each irreducible component $C_{i}$ is smooth. The \textbf{weak combinatorics} of the curve $C = \bigcup_{i=1}^{k}C_{i}$ is a vector of the form 
$$\mathcal{K}_{C} = (k_{1}, \ldots, k_{s}; m_{1}, \ldots, m_{p}),$$ where $k_{i}$ denotes the number of irreducible components of $C$ of degree $i$ and $m_{j}$ denotes the number of singular points of a curve $C$ of a given type $M_{j}$.
\end{definition}

We will use the above discussion to define strong Ziegler pairs of plane curves in the last section of our paper.

\begin{remark}
It is natural to ponder the question of whether there exists a relationship between the combinatorics $\mathcal{K}_C$ and $W_C$ for a given reduced curve $C$. It is easy to see that the combinatorics $\mathcal{K}_C$ is weaker than $W_C$. This implies that while one can read from $W_C$ the vector $\mathcal{K}_C$, the converse is not true.
\end{remark}

\section{Combinatorial polynomials associated with plus--one generated arrangement of conics with some quasi-homogeneous singularities}
We start with a preparatory result that will be used in our construction of a combinatorial polynomial that will be associated with a certain class of conic arrangements.
\begin{proposition}
\label{31}
    Let $\mathcal{C}=\{C_1,\ldots, C_k\}\subset\mathbb{P}^2_{\mathbb{C}}$ be an arrangement of $k\geq2$ smooth conics that admits $n_r$ ordinary quasi--homogeneous $r$-tuple points, $t_k$ singularities of type $A_k$ with $k \in \{3,5,7\}$, and $j$ singularities of type $J_{2,0}$. Assume that $\mathcal{C}$ is plus--one generated with exponents $(d_1, d_2, d_3)$, then
    $$d_1d_2+d_3=\sum_{r\geq2}{(r-1)n_r}+t_3+t_5+t_7+2j+2k.$$
\end{proposition}
\begin{proof}
    Recall that if $\mathcal{C}$ is plus--one generated of degree $d:=2k$ with exponents $(d_1,d_2,d_3)$, then by Proposition \ref{dimspl} the total Tjurina number is given by
    $$\tau(\mathcal{C})=(d-1)^2-d_1(d-d_1-1)-(d_3-d_2+1).$$
    Taking into account the types of singularities that are admissible by our arrangements, we have $$\tau(\mathcal{C})=\sum_{p\in\text{Sing}(\mathcal{C})}\mu_p=\sum_{r\geq2}{(r-1)^2n_r}+3t_3+5t_5+7t_7+10j.$$
    Moreover, using B\'ezout theorem applied to our arrangement of conics we get the identity:
    $$4\cdot\binom{k}{2}=2(k^2-k)=\sum_{r\geq2}{\binom{r}{2}}n_r+2t_3+3t_5+4t_7+6j.$$
    Thus we can rewrite $\tau(\mathcal{C})$, remembering that $d=2k$, as
    \begin{align*}
        \tau(\mathcal{C}) &= 2\cdot 2(k^2-k)+1- d_1(d-d_1-1) - d_3+d_2-1 \\ &= \sum_{r\geq2}(r^2-r)n_r+2(2t_3+3t_5+4t_7+6j)-d_1 (d-d_{1})-d_3+(d_1+d_2).
    \end{align*}
    Recall that for $\mathcal{C}$ is plus--one generated curves one has $2k=d_1+d_2$, which gives us
    $$\tau(\mathcal{C})=\sum_{r\geq2}{(r^2-r)n_r}+4t_3+6t_5+8t_7+12j-d_1d_2-d_3+2k.$$
    Therefore, we arrive at the equation
    $$\sum_{r\geq2}{(r-1)^2n_r}+3t_3+5t_5+7t_7+10j=\sum_{r\geq2}{(r^2-r)n_r}+4t_3+6t_5+8t_7+12j-d_1d_2-d_3+2k,$$
    which gives us finally
    \begin{align*}
    d_1d_2+d_3 &= \sum_{r\geq2}{\bigg(r^2-r-(r-1)^2\bigg)n_r}+t_3+t_5+t_7+2j+2k \\
    &=\sum_{r\geq2}{(r-1)n_r}+t_3+t_5+t_7+2j+2k
    \end{align*}
and this completes the proof.
\end{proof}
Now we would like to focus on the combinatorial polynomial that can be associated with any arrangement of conics having singularities prescribed above. If $\mathcal{C}\subset\mathbb{P}^2_\mathbb{C}$ is a plus--one generated arrangement of $k$ conics with exponents $(d_1, d_2, d_3)$, then
\begin{enumerate}
    \item[1)] $d_1+d_2=2k,$ and
    \item[2)] $d_1d_2 + d_{3}=\sum_{r\geq2}{(r-1)n_r}+t_3+t_5+t_7+2j+2k$, which follows from Proposition \ref{31}.
\end{enumerate}
For an $m$-syzygy conic arrangement $\mathcal{C}$ of degree $d=2k\geq 4$ and exponents $(d_1, d_2, d_3 , \ldots , d_{m})$ with $m\geq 3$, let us define the following polynomial
$$\mathfrak{P}(\mathcal{C}, d_3; t):= 1+ 2k t+\left(\sum_{r\geq2}{(r-1)n_r}+t_3+t_5+t_7+2j+2k-d_3\right)t^2.$$

\begin{theorem}
\label{mmm}
In the setting of Proposition \ref{31}, if $\mathcal{C}$ is plus--one generated conic arrangement with exponents $(d_1, d_2, d_3)$, then the polynomial $\mathfrak{P}$ splits over the rationals, and the following identity holds:
    $$\mathfrak{P}(\mathcal{C}, d_3;t)=(1+d_1t)(1+d_2t).$$
\end{theorem}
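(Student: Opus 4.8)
The plan is to compute the coefficients of $\mathfrak{P}(\mathcal{C}, d_3; t)$ one by one and recognize them as the elementary symmetric polynomials in $d_1$ and $d_2$. The constant term is $1$, which matches the constant term of $(1+d_1t)(1+d_2t)$. For the coefficient of $t$, recall that a plus--one generated curve of degree $d$ has $d_1 + d_2 = d$ by definition, and here $d = \deg \mathcal{C} = 2k$; hence the linear coefficient $2k$ equals $d_1 + d_2$, which is exactly the coefficient of $t$ in $(1+d_1t)(1+d_2t)$.

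The only remaining point is the coefficient of $t^2$. Here I would invoke Lemma \ref{31}, which gives
$$\sum_{r\geq2}(r-1)n_r + t_3 + t_5 + t_7 + 2j + 2k = d_1 d_2 + d_3.$$
Substituting this into the definition of $\mathfrak{P}$, the coefficient of $t^2$ becomes $(d_1 d_2 + d_3) - d_3 = d_1 d_2$. Therefore
$$\mathfrak{P}(\mathcal{C}, d_3; t) = 1 + (d_1 + d_2)t + d_1 d_2 t^2 = (1 + d_1 t)(1 + d_2 t),$$
and since $d_1, d_2$ are (in fact non-negative) integers, this is a factorization into linear factors with rational — indeed integer — coefficients, so $\mathfrak{P}$ splits over the rationals.

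As for difficulty: there is essentially no obstacle once Lemma \ref{31} is in hand — the statement is a formal rewriting of that lemma together with the defining identity $d = d_1 + d_2 = 2k$ for plus--one generated conic arrangements. The one thing to keep track of is that the integer $d_3$ appearing in the definition of $\mathfrak{P}$ is the same $d_3$ as the third exponent entering Lemma \ref{31}; this is guaranteed by the hypothesis that $\mathcal{C}$ is plus--one generated with exponents $(d_1, d_2, d_3)$, so the two copies of $d_3$ cancel cleanly. One could also note that the roots of $\mathfrak{P}$ are $-1/d_1$ and $-1/d_2$, which makes the rational-splitting claim transparent.
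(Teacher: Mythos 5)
Your proposal is correct and is essentially identical to the paper's own proof: both substitute the identity of Lemma \ref{31} to rewrite the $t^2$-coefficient as $d_1d_2$ and use $d_1+d_2=2k$ to identify the linear coefficient, yielding the factorization $(1+d_1t)(1+d_2t)$. Nothing further is needed.
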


\begin{proof}
    Using Proposition \ref{31}, we see that $\mathfrak{P}$ has the presentation
    $$\mathfrak{P}(\mathcal{C}, d_3;t)=1+(d_1+d_2)t+d_1d_2t^2 ,$$
    which automatically gives us
    $$\mathfrak{P}(\mathcal{C}, d_3;t)=(1+d_1t)(1+d_2t)$$
    and this completes the proof.
\end{proof}

\begin{example}
\label{ex33}
Here, we consider the unique arrangement, up to projective equivalence and denoted by $\mathcal{C}$, of four conics in the complex projective plane with $12$ tacnodes and no other singularities  \cite[Proposition 7]{GM}. We can compute that
$$\mathfrak{P}(\mathcal{C},h;t) = 1 + 8t + (20-h)t^2,$$
where $h$ is considered as an integer-valued variable that plays the role of $d_3$. Let us recall that, according to \cite[Theorem 2.1]{DimSer} and Remark \ref{cruci}, for a reduced plane curve $C$ of degree $d$ with only ${\rm ADE}$ singularities one has
$$ \alpha_C \cdot d - 2 \leq d_{1} \leq h \leq d-1,$$
where $\alpha_C$ is the Arnold exponent of $C$. In our setting, $\alpha_{\mathcal{C}} = 3/4$, and hence $h \in \{4,5,6,7\}$.
We can easily check that for $h \in \{6,7\}$ our polynomial $\mathfrak{P}(\mathcal{C},h;t)$ does not have rational roots, for $h=4$ we have $\mathfrak{P}(\mathcal{C},4;t) = (1+4t)(1+4t)$, and for $h=5$ we arrive at $\mathfrak{P}(\mathcal{C},5;t) = (1+3t)(1+5t)$. Note that the last case cannot occur because $d_{1} \geq  4$, leaving only the admissible case of $h = 4$. According to \cite{DJP}, we know that our arrangement $\mathcal{C}$ is nearly free with $(d_{1},d_{2},d_{3})=(4,4,4)$, as predicted by our considerations.
\end{example}

Now we pass to a Hirzebruch-type inequality devoted to our conic arrangements.
\begin{theorem}
\label{hirz}
Let $\mathcal{C} \subset \mathbb{P}^{2}_{\mathbb{C}}$ be an arrangement of $k\geq 4$ smooth conics admitting $n_{i}$ ordinary intersection points with $i \in \{2,3,4\}$, $t_{p}$ singularities of type $A_{p}$ with $p \in \{3,5,7\}$, and $j$ singularities of type $J_{2,0}$. Then one has
\begin{equation}
\label{hirze}
8k + n_{2} + \frac{3}{4}n_{3} \geq \frac{5}{2}t_{3} + 5t_{5} + \frac{29}{4}t_{7} + 6j.
\end{equation}
\end{theorem}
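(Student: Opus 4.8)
The plan is to deduce the inequality from the orbifold Bogomolov--Miyaoka--Yau (BMY) / Hirzebruch-type inequality for plane curves, applied to the conic arrangement $\mathcal{C}$ regarded as a reduced curve of degree $d = 2k$. First I would recall the general form of such an inequality: for a reduced plane curve $C$ of degree $d$ with quasi-homogeneous singularities, there is an estimate (in the style of Langer, or of the classical Hirzebruch inequality for line arrangements) bounding a weighted count of the singularities, where each singularity $p$ contributes a local term depending on its topological type through invariants such as the Milnor number $\mu_p$, the log-canonical threshold $\mathrm{lct}_p$, or more refined orbifold data. The strategy is to plug in the specific singularity types admitted by $\mathcal{C}$ --- ordinary double, triple and quadruple points, the tangential singularities $A_3, A_5, A_7$, and $J_{2,0}$ --- compute each local contribution, and simplify. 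The term $8k$ on the left is exactly what one expects from the $d^2 = 4k^2$ and linear-in-$d$ terms after dividing through, while $n_2$ and $\tfrac34 n_3$ appear with positive sign because nodes and ordinary triple points are ``cheap'' singularities that help the inequality, whereas $A_j$ and $J_{2,0}$ are ``expensive'' and land on the right-hand side with the stated coefficients $\tfrac52, 5, \tfrac{29}{4}, 6$.

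Concretely, I would proceed as follows. Step one: state the precise ambient inequality I am invoking --- I expect this to be the Hirzebruch-type inequality of the form $\sum_p \nu_p \leq \text{(polynomial in } d\text{)}$ valid for curves whose Arnold exponent is not too small, or equivalently Langer's orbifold Miyaoka--Yau inequality specialized to this setting; whichever is cited earlier in the companion literature (\cite{Pok}, \cite{DJP}) and applies to arrangements with these quasi-homogeneous singularities. Step two: for $i \in \{2,3,4\}$ a point of type ordinary $i$-fold has a local contribution that I would read off from the known values ($\mathrm{lct} = 2/i$, $\mu = (i-1)^2$); note that ordinary quadruple points contribute nothing to the final inequality (coefficient $0$), so I must check that their local term is absorbed exactly by the $8k$ bound. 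Step three: for the tangential singularities I would use $\mathrm{lct}_{A_3} = 3/4$, and the analogous log-canonical thresholds for $A_5$ and $A_7$, together with $\mu_{A_3}=3$, $\mu_{A_5}=5$, $\mu_{A_7}=7$, to extract the coefficients $\tfrac52, 5, \tfrac{29}{4}$. Step four: the $J_{2,0}$ singularity, with $\mu = 10$ and its known log-canonical threshold, yields the coefficient $6$. Step five: collect terms, move the positive-coefficient singularities to the left and the negative ones to the right, and verify that the $k$-dependent part collapses to exactly $8k$ after using $d = 2k$ and the fact that all components are smooth conics (so there are no self-intersection contributions beyond what is counted).

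The main obstacle I anticipate is bookkeeping the precise local invariants for $A_5$, $A_7$, and especially $J_{2,0}$: the coefficient $\tfrac{29}{4}$ for $A_7$ in particular is not simply $\mathrm{lct}$ or $\mu$ but a specific combination that emerges from the orbifold Euler-characteristic defect, so I would need to be careful to use the correct formula (the one appearing in Langer's work or in the Hirzebruch-type statements for ADE singularities) rather than a naive guess. A secondary subtlety is ensuring the inequality I invoke genuinely applies --- some Hirzebruch-type inequalities require the curve not to have a component of high multiplicity or require $\alpha_{\mathcal{C}}$ above a threshold; since $\mathcal{C}$ consists of $k \geq 4$ smooth conics with only the listed singularities, $\alpha_{\mathcal{C}} \geq 3/7$ (the worst case being $A_7$), and I would need to confirm this suffices for the version being used. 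Once the correct local contributions are in hand, the remainder is a routine linear simplification.
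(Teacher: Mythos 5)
Your overall strategy coincides with the paper's: invoke an orbifold BMY/Hirzebruch-type inequality, compute the local contribution of each admissible singularity type, and combine with the intersection (B\'ezout) count to eliminate the quadratic term in $k$. However, as written the argument has a genuine gap: everything hinges on the precise ambient inequality and its local terms, and you leave both unspecified. The paper uses Kobayashi's inequality: for a reduced plane curve of degree $d\geq 7$ with only ${\rm ADE}$ and simple elliptic singularities,
$$\sum_{p\in {\rm ADE}}\Bigl(\mu_p+1-\tfrac{2}{|\Gamma(p)|}\Bigr)+\sum_{p\in {\rm SE}}(\mu_p+1)\ \leq\ \tfrac{5}{6}d^2-d,$$
where $|\Gamma(p)|$ is the order of the finite group attached to the ${\rm ADE}$ type ($4$ for a node, $16$ for an ordinary triple point, and $8,12,16$ for $A_3,A_5,A_7$). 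This local term is not a function of ${\rm lct}_p$ and $\mu_p$ alone, so Steps two through four as you describe them (reading contributions off from ${\rm lct}$ and $\mu$) would not produce the coefficients $\tfrac52,5,\tfrac{29}{4},6$; you need the extra datum $|\Gamma(p)|$. You flag this yourself as the ``main obstacle,'' which is honest, but it means the proof is not actually carried out: identifying the correct local weights is precisely the content of the theorem.

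A second, related gap is the reduction of the quadratic term. The paper uses the explicit count $2(k^2-k)=n_2+3n_3+6n_4+2t_3+3t_5+4t_7+6j$ to rewrite $\tfrac{10}{3}k^2-2k$ as $\tfrac53(n_2+3n_3+\cdots+6j)+\tfrac43 k$; this is exactly where $n_4$ cancels (local term $9+1=10$ versus B\'ezout weight $\tfrac53\cdot 6=10$) and where $n_2,n_3$ acquire positive coefficients on the left-hand side. You gesture at this (``the $k$-dependent part collapses to $8k$ \dots since all components are smooth conics'') but never write the identity down; without it the right-hand side of the ambient inequality remains quadratic in $k$ and the stated bound does not follow. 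Finally, a small factual slip: here $\alpha_{\mathcal C}\geq 1/2$ (the worst log-canonical thresholds among the admissible types are $1/2$, attained at ordinary quadruple points and at $J_{2,0}$, while ${\rm lct}(A_7)=5/8$), not $3/7$; this does not affect the argument, since Kobayashi's inequality only requires the singularities to be ${\rm ADE}$ or simple elliptic and $d\geq 7$, which is guaranteed by $k\geq 4$.
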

\begin{proof}
We will apply Kobayashi's inequality to our plane curves \cite{Kobayashi}, namely if $C$ is a reduced plane curve of degree $d \geq 7$ admitting only ${\rm ADE}$ and simple elliptic (${\rm SE}$ for short) singularities, where the latter are exactly ordinary quadruple points and $J_{2,0}$ singularities, then the following inequality holds:
\begin{equation}
\label{Kob}
\sum_{p \in {\rm Sing}(C) \cap {\rm ADE}} \bigg( \mu_{p}+1 - \frac{2}{|\Gamma(p)|}\bigg) + \sum_{p \in {\rm Sing}(C) \cap {\rm SE}}(\mu_{p}+1) \leq \frac{5}{6}d^{2}-d,
\end{equation}
where $|\Gamma(p)|$ denotes the order of the group naturally associated with ${\rm ADE}$ singularities -- we recall these numbers below for the completeness of our presentation (cf. \cite{Kobayashi}).

\begin{table}[h!]
    \centering
    \begin{tabular}{c|c}
     Singularity type & $|\Gamma(p)|$  \\ \hline
    ordinary node   & $4$  \\
    ordinary triple point & $16$ \\
    $A_{3}$ & $8$   \\
    $A_{5}$ & $12$   \\
    $A_{7}$ & $16$ 
    \end{tabular}
    \caption {Orders of $\Gamma(p)$.}
    \label{table: gammas}
\end{table}
\noindent
Finally, the following na\"ive count holds:
\begin{equation*}
2(k^{2}-k) = n_{2} + 3n_{3} + 6n_{4} + 2t_{3} + 3t_{5} + 4t_{7} + 6j.
\end{equation*}
We use \eqref{Kob} in our setting. After some simple computations, we get
$$\frac{3}{2}n_{2} + \frac{39}{8}n_{3} + 10n_{4} + \frac{15}{4}t_{3} + \frac{35}{6}t_{5} + \frac{63}{8}t_{7} + 11j \leq \frac{10}{3}k^{2}-2k,$$
and after using the above na\"ive combinatorial count we finally obtain
$$32k + 4n_{2} + 3n_{3} \geq 10t_{3} + 20t_{5} + 29t_{7} + 24j,$$
which completes the proof.
\end{proof}
In the example below we show how to merge the above two technical results in order to decide whether a certain weak-combinatorics can lead to a plus--one generated curve.
\begin{example}
\label{ex35}
Consider the following weak-combinatorics
$$\mathcal{K}_{\mathcal{C}} = (k_{2};t_{3},n_{3}) = (5;17,2).$$
We can compute that
$$\mathfrak{P}(\mathcal{C},h;t) = 1+10t + (31-h)t^2,$$
where $h$ is considered as an integer-valued variable that plays the role of $d_3$. Since $\alpha_{\mathcal{C}} = \frac{2}{3}$ we have $h \in \{5,6,7,8,9\}$. We can easily check that for $h \in \{5,8,9\}$ our polynomial does not have rational roots. Moreover, we can observe that
\begin{itemize}
\item for $h=6$ we have $\mathfrak{P}(\mathcal{C},6;t) = (1+5t)(1+5t)$, and
\item for $h=7$ we have $\mathfrak{P}(\mathcal{C},h;t) = (1+4t)(1+6t)$.
\end{itemize}
Note that the last case cannot occur since $d_1 \geq 5$. This means that if the weak combinatorics $\mathcal{K}_{\mathcal{C}}$ can be geometrically realized, then this realization could potentially be an example of a plus--one generated curve. To verify this, we check whether $\mathcal{K}_{\mathcal{C}}$ satisfies \eqref{hirze}. Notice that
$$40 + \frac{3}{2} = 8k+\frac{3}{4}n_{3} \geq \frac{5}{2}t_{3} = \frac{85}{2},$$
a contradiction. Hence $\mathcal{K}_{\mathcal{C}}$ cannot be realized geometrically as a plus--one generated curve.
\end{example}
\begin{remark}
Obviously our techniques have some limitations once we are close to the boundary cases. For instance, using our methods presented above, we cannot decide whether there exists a geometric realization of the weak-combinatorics $\mathcal{K}_{\mathcal{C}} = (k_{2};t_{3}, n_{2},n_{3}) = (5;17,3,1)$ that can lead us to a plus--one generated arrangement of $5$ conics.
\end{remark}
\section{Classification of plus--one generated arrangements of conics with nodes and tacnodes}
In this section we present our third main result of the paper, namely a partial weak-combinatorial classification of plus--one generated conic arrangements with nodes and tacnodes. Our considerations here are motivated by a recent paper \cite{DJP}, where the authors obtained a complete classification of nearly free arrangements of conics with nodes and tacnodes as singularities. Before that, we start with some warm-up that will shed some light on potential weak-combinatorics of our conic arrangements.

\begin{proposition}
    Let $\mathcal{C}=\{C_1, \ldots,C_k\}\subset\mathbb{P}^2_{\mathbb{C}}$ be an arrangement of $k\geq 2$ smooth conics that admits only $n_2$ nodes and $t_3$ tacnodes. Assume that $\mathcal{C}$ is minimal plus--one generated. Then $n_2=2$.
\end{proposition}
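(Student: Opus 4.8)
The plan is to combine the Poincaré-type identity of Lemma~\ref{31} with the Bézout count for an arrangement of $k$ smooth conics, and then feed in the minimality hypothesis to reduce everything to a single quadratic constraint on the exponent gap $d_2-d_1$.

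First I would record the data available. Since $\mathcal{C}$ is plus--one generated of degree $d=2k$ with exponents $(d_1,d_2,d_3)$, one has $d_1\le d_2\le d_3$ and $d_1+d_2=2k$. As the only singularities are nodes and tacnodes, Lemma~\ref{31} collapses to $d_1d_2+d_3=n_2+t_3+2k$, while Bézout gives $2(k^2-k)=n_2+2t_3$; in particular $n_2$ is even and $t_3=k^2-k-\tfrac{n_2}{2}$. Eliminating $t_3$ between these two relations yields the identity $d_1d_2+d_3=\tfrac{n_2}{2}+k^2+k$.

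Next I would invoke the minimality hypothesis $\delta L(\mathcal{C})=d_3-d_2=1$, i.e. $d_3=d_2+1$, together with the parametrization $d_1=k-a$, $d_2=k+a$, which is legitimate because $d_1+d_2=2k$ is even, so $a:=\tfrac{d_2-d_1}{2}$ is a non-negative integer. Substituting $d_1d_2=k^2-a^2$ and $d_3=k+a+1$ into that identity and cancelling $k^2+k$ leaves $1+a-a^2=\tfrac{n_2}{2}$, hence $n_2=2+2a(1-a)$.

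Finally, the inequality $n_2\ge 0$ forces $a^2-a-1\le 0$, so the non-negative integer $a$ lies in $\{0,1\}$; in either case $a(1-a)=0$ and therefore $n_2=2$. I do not expect a genuine obstacle in this argument: the only points needing care are the parity remark that makes $a$ an integer and the closing non-negativity bound on $a$, both of which are immediate. The actual content is the elimination step that removes $t_3$ and exposes $n_2$ as a quadratic function of the exponent gap $d_2-d_1$.
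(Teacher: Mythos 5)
Your proof is correct, and it takes a cleaner route than the paper's at the decisive step. Both arguments ultimately rest on the Dimca--Sticlaru formula of Proposition \ref{dimspl} combined with the B\'ezout count $2(k^2-k)=n_2+2t_3$: you access it through Lemma \ref{31}, while the paper rewrites it as a quadratic in $r={\rm mdr}(f)$, namely $r^2-r(2k-1)+2k^2-2k-t_3-1=0$. Setting $r=d_1=k-a$ shows the two computations are literally equivalent, so the real difference is the endgame. The paper only extracts the necessary condition $\Delta_r\ge 0$ from its quadratic, which yields $n_2\le 5/2$ and hence $n_2\in\{0,2\}$ by parity; it must then dispose of $n_2=0$ by a separate geometric argument (arrangements with $t_3=k(k-1)$ exist only for $k\le 4$ and are nearly free, hence not minimal plus--one generated), which imports outside facts. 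Your full elimination instead produces the exact identity $n_2=2+2a(1-a)$ with $a=\tfrac{d_2-d_1}{2}\in\mathbb{Z}_{\ge 0}$, and since $n_2\ge 0$ forces $a\in\{0,1\}$, both branches give $n_2=2$ outright; in particular $n_2=0$ is excluded arithmetically rather than geometrically. All the small points you flag (parity of $d_2-d_1$, evenness of $n_2$, applicability of Lemma \ref{31} to nodes and tacnodes as the $r=2$ ordinary points and $A_3$ singularities) check out, so your argument is a self-contained and slightly sharper alternative to the one in the paper.
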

\begin{proof}
    Assume that $\mathcal{C}: f=0$ is minimal plus--one generated with $r=\text{mdr}(f)$, then the following equation holds:
    $$r^2-r(2k-1)+(2k-1)^2=\tau(\mathcal{C})+2=n_2+3t_3+2.$$
     By the combinatorial count for conic arrangements with nodes and tacnodes, we know that
    $$4\cdot \binom{k}{2}=n_2+2t_3.$$
    Hence we obtain
    $$r^2-r(2k-1)+(2k-1)^2=4\cdot \binom{k}{2}+t_3+2.$$
    After simple manipulations, we arrive at
    $$r^2-r(2k-1)+2k^2-2k-t_3-1=0.$$
    The above equation has integer roots if the discriminant
    $\Delta_r=-4k^2+4k+4t_3+5$ is non-negative, so we get $t_3\geq k^2-k-\frac{5}{4}$.
    Coming back to the combinatorial count, we have 
    $$2k^2-2k=4\cdot \binom{k}{2}=n_2+2t_3\geq n_2+2\cdot \left(k^2-k-\frac{5}{4}\right),$$
    and we obtain $n_2\leq \frac{5}{2}$. Moreover, if $\mathcal{C}$ is conic arrangement with only nodes and tacnodes, then 
    $$n_{2} = 2k(k-1)-2t_{3} = 2\cdot(k(k-1)-t_{3})$$
    and hence either $n_{2}=0$ or $n_{2}=2$.
    If $n_{2}=0$, then $t_{3}=k(k-1)$ and this is possible only if $k\leq 4$, see \cite[Section 7]{GM}. Moreover, arrangements of $k\leq 4$ conics with $t_{3}=k(k-1)$ are nearly free by \cite[Proposition 1.6]{DJP} and hence they are not minimal plus--one generated, which completes the proof.
\end{proof}
This result is a nice enumerative criterion that can allow us immediately to decide which plus--one generated conic arrangements with nodes and tacnodes are actually minimal.

We are ready to present our main result of this section.
\begin{theorem}
\label{42}
If $\mathcal{C} \subset \mathbb{P}^{2}_{\mathbb{C}}$ is a plus--one generated arrangement of $k\geq 2$ conics with only $n_{2}$ nodes and $t_{3}$ tacnodes and $d_{3}>d_{2}$, then $k \in \{2,3,4\}$. Furthermore, we can geometrically realize the following weak-combinatorics as plus--one generated conic arrangements with nodes and tacnodes:
$$(k_{2};n_{2},t_{3}) \in \{(2;2,1), (3;2,5), (3;4,4), (4;2,11)\}.$$
\end{theorem}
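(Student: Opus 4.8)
The plan is to extract numerical constraints from Lemma~\ref{31}, the B\'ezout count and the inequalities on the exponents, and then to realize the surviving weak-combinatorics by explicit examples. Put $d=2k$ and let $(d_1,d_2,d_3)$ be the exponents of $\mathcal C$. Plus--one generation gives $d_1+d_2=2k$, and since $d_1\le d_2$ we have $d_1\le k\le d_2$; together with $d_3\le d-1=2k-1$ and the hypothesis $d_3>d_2$ this yields $k<d_3\le 2k-1$. Because $\mathcal C$ carries only nodes and tacnodes, Lemma~\ref{31} reads $d_1d_2+d_3=n_2+t_3+2k$, while the B\'ezout count for an arrangement of $k$ conics reads $2k(k-1)=n_2+2t_3$. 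Eliminating $n_2$ gives the identities $t_3=2k^2-d_1d_2-d_3$ and $n_2=2(d_1d_2+d_3)-2k(k+1)$.

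First I would bound $k$. From $d_1+d_2=2k$ with $d_1\le k$ one gets $d_1d_2\le k^2$. If $t_3=0$, the first identity forces $d_1d_2+d_3=2k^2\le k^2+(2k-1)$, i.e. $(k-1)^2\le 0$, which is impossible for $k\ge 2$; hence $t_3\ge 1$, and therefore $\alpha_{\mathcal C}=3/4$ (the smaller of the two log--canonical thresholds $1$ at a node and $3/4$ at a tacnode). The bound $\alpha_{\mathcal C}\cdot d-2\le d_1$, valid since $\mathcal C$ has only ${\rm ADE}$ singularities, then gives $d_1\ge \tfrac{3k}{2}-2$, and combined with $d_1\le k$ this forces $k\le 4$. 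Thus $k\in\{2,3,4\}$.

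Next I would determine the possible exponents and the resulting weak-combinatorics. For $k\in\{3,4\}$ the chain $\tfrac{3k}{2}-2\le d_1\le k$ forces $d_1=d_2=k$; for $k=2$ the only other option $d_1=1$ would give $d_2=3$, hence $d_3>3$, contradicting $d_3\le 2k-1=3$, so again $d_1=d_2=k$. Substituting $d_1=d_2=k$ into the two identities gives $t_3=k^2-d_3$ and $n_2=2d_3-2k$, with $d_3$ free in $\{k+1,\dots,2k-1\}$. Letting $d_3$ run through these values produces exactly the candidate triples $(k_2;n_2,t_3)$: $(2;2,1)$ for $k=2$; $(3;2,5)$ and $(3;4,4)$ for $k=3$; and $(4;2,11)$, $(4;4,10)$, $(4;6,9)$ for $k=4$.

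It then remains to realize the four triples appearing in the statement. For each of them I would write down explicit equations of a conic arrangement carrying exactly the prescribed nodes and tacnodes -- for instance a pair of internally tangent circles realizes $(2;2,1)$, since in $\mathbb{P}^2_{\mathbb C}$ two distinct circles also meet transversally at the two points $[1:\pm i:0]$ -- and then verify with \texttt{SINGULAR} that the Milnor algebra $M(f)$ has a $3$--term minimal free resolution with exponents $(k,k,d_3)$, where $d_3=k^2-t_3>k=d_2$; by Proposition~\ref{dimspl} this certifies that $\mathcal C$ is plus--one generated with $d_3>d_2$. I expect the main obstacle to be precisely these geometric constructions for $k=3,4$: one must impose all the required tangencies and transverse incidences simultaneously while excluding any accidental higher-order contact ($A_5$ or $A_7$) or extra singular point, and then push through a rather heavy syzygy computation. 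In fact, for the two remaining candidates $(4;4,10)$ and $(4;6,9)$ we were not able to decide whether a plus--one generated realization exists, and this is exactly the source of the open problems announced in the introduction.
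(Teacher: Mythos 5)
Your combinatorial reduction is essentially the paper's argument: the same Arnold-exponent bound $\alpha_{\mathcal C}\cdot 2k-2\le d_1\le k$ forces $k\le 4$ and $d_1=d_2=k$, and the same Tjurina/B\'ezout bookkeeping (you route it through Lemma \ref{31}, the paper through Proposition \ref{dimspl} and the defect $\nu$) yields exactly the six candidate triples; your explicit check that $t_3=0$ is impossible is in fact a small improvement, since the paper takes $\alpha_{\mathcal C}=3/4$ for granted. The only incomplete step is the existence half: apart from the two tangent circles for $(2;2,1)$ you do not actually produce arrangements realizing $(3;2,5)$, $(3;4,4)$ and $(4;2,11)$, whereas the paper obtains the first and third from \cite[Theorem 3.1]{MP} and exhibits the explicit family $(x^2+y^2-z^2)(\ell x^2+y^2-z^2)(x^2+\ell y^2-z^2)$ for $(3;4,4)$. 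Your handling of the leftover candidates $(4;4,10)$ and $(4;6,9)$ matches the paper's: they are excluded from the realizability claim and deferred to the open problems.
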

\begin{proof}
Since $\mathcal{C}$ is a plus--one generated arrangement of $k$ conics with only nodes and tacnodes as singularities we have
$$\alpha_{\mathcal{C}}\cdot 2k - 2 \leq d_{1} \leq 2k/2 = k,$$
where $\alpha_{\mathcal{C}}$ is the Arnold exponent. In our situation $\alpha_{\mathcal{C}} = \min\{\frac{3}{4},1\} = \frac{3}{4}$, and this leads us to 
$$\frac{3}{2}k-2 \leq k,$$
which implies that $k\leq 4$.
Now we are going to provide an enumerative description of the weak-combinatorics of our conic arrangements.

We start with $k=2$. In this situation we have that $d_{1} \in \{1,2\}$, $d_{1}+d_{2}=2k$, and $d_{2} < d_{3} \leq 2k-1 = 3$. Recall that if $d_{1}=1$, then $\mathcal{C}$ is either free or nearly free, hence $d_{1}=2$, which also implies that $d_{2}=2$ and $d_{3}=3$. This means that the defect can only be equal to $\nu(\mathcal{C}) =2$ and this implies that $\tau(\mathcal{C}) = 5$. It is easy to see that the only admissible weak-combinatorics is $(k_{2};n_{2},t_{3}) = (2;2,1)$. By \cite[Theorem 3.1]{MP}, this combinatorics can be geometrically realized and hence we have an example of a plus--one generated arrangement with exponents $(d_{1},d_{2},d_{3}) = (2,2,3)$. Our classification for $k=2$ is completed.

Let us pass to the situation with $k=3$. In this case, we have
$$\lceil3k/2-2\rceil = 3 \leq d_{1} \leq k=3,$$
hence $d_{1}=d_{2} = 3$ and $d_{2} < d_{3} \leq 2k-1 = 5$, so we get $d_{3} \in \{4,5\}$. 
This means that $\nu(\mathcal{C}) \in \{2,3\}$. Let us focus on the case $\nu(\mathcal{C})=2$. This tells us that $17 = \tau(\mathcal{C}) = n_{2} + 3t_{3}$ and by the combinatorial count for conics we have $12 = n_{2} + 2t_{3}$, so we get the weak-combinatorics of the form $(k_{2};n_{2},t_{3}) = (3;2,5)$. It is known, by \cite[Theorem 3.1]{MP}, that this weak-combinatorics is geometrically realizable giving us an example of a plus--one generated arrangement with exponents $(d_{1},d_{2},d_{3})=(3,3,4)$. Let us pass to the situation with $\nu(\mathcal{C}) = 3$. Using the same combinatorial argument as above we can observe that the only admissible weak-combinatorics has the form $(k_{2},n_{2},t_{3}) = (3;4,4)$ and our aim now is to present a geometric realization. Consider the arrangement given by 
$$Q(x,y,z) = (x^2+y^2-z^2)(\ell x^2+y^2-z^2)(x^2+\ell y^2-z^2),$$
where $\ell\in\mathbb{C}\backslash\{0,\pm1\}$ is fixed.
It is easy to see that this arrangement of conics has $n_{2}=4$ and $t_{3}=4$. We can compute the minimal free resolution of the Milnor algebra obtaining that this arrangement is plus--one generated with exponents $(d_{1},d_{2},d_{3})=(3,3,5)$, which completes our classification for $k=3$.

Finally, let us focus on $k=4$. We have
$$\lceil3k/2-2\rceil = 4 \leq d_{1} \leq k=4,$$
hence $d_{1}=d_{2} = 4$ and $d_{2} < d_{3} \leq 2k-1 = 7$, so we get $d_{3} \in \{5,6,7\}$.  Let us focus on the case when $\nu(\mathcal{C})=2$, which means that we have constraints $35=\tau(\mathcal{C})=n_{2}+3t_{3}$ and $24 = n_{2}+2t_{3}$. These two conditions give us the weak-combinatorics of the form $(k_{2};n_{2},t_{3}) = (4;2,11)$. It is known, again by \cite[Theorem 3.1]{MP}, that this weak-combinatorics can be geometrically realized and we get an example of plus--one generated arrangement with exponents $(4,4,5)$. This completes the proof of the second part of our statement.
\end{proof}

 In order to complete our classification of plus--one generated conic arrangements with nodes and tacnodes, we have to deal with two remaining subcases for $k=4$, namely either $\nu(\mathcal{C})=3$ or $\nu(\mathcal{C})=4$. Assume that $\nu(\mathcal{C}) = 3$. In this situation, we have  $34 = \tau(\mathcal{C}) = n_{2}+3t_{3}$ and $24 = n_{2}+2t_{3}$, so the unique admissible weak combinatorics has the form $(k_{2};n_{2},t_{3}) = (4; 4, 10)$. Thanks to an interesting result by Megyesi \cite{GM}, we know that there are exactly three multi-parameter families of arrangements consisting of four conics such that $n_{2}=4$ and $t_{3}=10$. After sampling many concrete realizations, we observed that the resulting arrangements are only $4$-syzygy. We now believe that every arrangement of four conics with ten tacnodes and four nodes is $4$-syzygy. To strengthen and justify our claim, we show that the most symmetric family of $4$ conics with $n_2 = 4$ and $t_3 = 10$ is only $4$-syzygy.
\begin{proposition}
\label{con4}
Consider the following one-parameter family of $4$ conics $\mathcal{C}_{r} \subset \mathbb{P}^{2}_{\mathbb{C}}$ given by
$$Q_{r}(x,y,z) = (x^2+y^2-z^2)(x^2+r^2 y^2 - r^2 z^2)(x^2+y^2-r^2 z^2)(r^2 x^2+y^2-r^2 z^2)$$
with $r \in \mathbb{C} \setminus \{0, \pm 1, \pm \iota\}$ and $\iota^{2}+1=0$. 
Then for every admissible parameter $r$ we have $\mathcal{K}_{\mathcal{C}_{r}}=(4;4,10)$, and the arrangement $\mathcal{C}_{r}$ is $4$-syzygy with exponents $(4,5,5,5)$.
\end{proposition}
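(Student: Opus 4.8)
The plan is to establish the two assertions in turn --- the weak combinatorics $\mathcal{K}_{\mathcal{C}_r}=(4;4,10)$, and then the precise shape of the minimal free resolution of $M(Q_r)$. For the combinatorics I would argue by pairwise intersection. Writing $\mathcal{C}_r=\{C_1,C_2,C_3,C_4\}$ with $C_i:q_i=0$, all four $q_i$ are diagonal quadrics (no mixed monomials), so for each of the six pairs $\{i,j\}$ the difference $q_i-q_j$ is, up to the nonzero scalar $r^2-1$, either a double line ($z^2$, $y^2$ or $x^2$) or a pair of distinct lines ($y^2-z^2$, $x^2-z^2$ or $x^2-y^2$). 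Since $C_i\cap C_j=C_i\cap\{q_i-q_j=0\}$, a direct inspection shows that five of the six pairs contribute two $A_3$ points each --- a double line meets $C_i$ in two points of intersection multiplicity $2$, and in the two cases $\{C_1,C_2\}$, $\{C_1,C_4\}$ the pair of lines is tangent to $C_i$ --- while the pair $\{C_2,C_4\}$ (for which $q_2-q_4\propto x^2-y^2$ meets $C_2$ transversally, using $r\ne\pm\iota$) contributes four nodes. Using $r\notin\{0,\pm1,\pm\iota\}$ one then checks that the $14$ resulting points are pairwise distinct and that no third conic passes through any of them, so $\mathcal{K}_{\mathcal{C}_r}=(4;4,10)$. (Equivalently, $\mathcal{C}_r=\phi^{-1}(\mathcal{L})$ for $\phi:[x{:}y{:}z]\mapsto[x^2{:}y^2{:}z^2]$ and a generic arrangement $\mathcal{L}$ of four lines, the $10$ tacnodes lying over the five nodes of $\mathcal{L}$ on the coordinate triangle and the $4$ nodes over the sixth node.) In particular $d:=\deg Q_r=8$, and since every singularity is quasi--homogeneous with $\tau(A_1)=1$ and $\tau(A_3)=3$ we get $\tau(\mathcal{C}_r)=34$.

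Next I would pin down ${\rm mdr}(Q_r)$ and the exponents. Since $\alpha_{\mathcal{C}_r}=\min\{1,\tfrac34\}=\tfrac34$, the bound $\alpha_{\mathcal{C}_r}\cdot d-2\le d_1$ (valid for curves with only ${\rm ADE}$ singularities) gives $d_1={\rm mdr}(Q_r)\ge 4$, while $d_m\le d-1=7$. Plugging $d=8$, $\tau=34$ into the standard numerical identities for the minimal resolution of $M(Q_r)$, namely $\sum_i d_i-\sum_j e_j=d-1$ and $\sum_i d_i^2-\sum_j e_j^2=2\tau-(d-1)^2=19$ (together with minimality of the resolution), a short case analysis leaves only two possibilities: the exponent sequence $(4,4,6)$, in which case $\mathcal{C}_r$ is $3$-syzygy, hence plus--one generated by Proposition~\ref{dimspl} with defect $3$; or the exponent sequence $(4,5,5,5)$ with relation degrees $(6,6)$, in which case $\mathcal{C}_r$ is $4$-syzygy. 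These two cases are distinguished by a single integer: $\dim_{\mathbb{C}}AR(Q_r)_4$ equals $2$ in the first and $1$ in the second. Equivalently, since $\dim_{\mathbb{C}}AR(Q_r)_4=3\binom{6}{2}-\dim_{\mathbb{C}}(J_{Q_r})_{11}=\dim_{\mathbb{C}}M(Q_r)_{11}-33$, the two cases correspond to $\dim_{\mathbb{C}}M(Q_r)_{11}$ being $35$ or $34$.

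It then remains to rule out $(4,4,6)$, and here I would compute. The structural reason $\mathcal{C}_r$ fails to be plus--one generated --- equivalently, the reason ${\rm mdr}(Q_r)$ drops to $4$ --- is its special position: the four conics span the $3$-dimensional space of diagonal quadrics, hence are linearly dependent ($r^2q_1+q_3=q_2+q_4$), and $\mathcal{C}_r$ is the abelian cover of a line arrangement described above, which forces a distinguished low-degree Jacobian syzygy. Concretely, I would run \verb}SINGULAR} over the field $\mathbb{Q}(r)$ to obtain the minimal free resolution of $M(Q_r)$ for all admissible parameter values simultaneously, read off $\dim_{\mathbb{C}}AR(Q_r)_4=1$ together with three further generators in degree $5$, and conclude that $\mathcal{C}_r$ is $4$-syzygy with exponents $(4,5,5,5)$. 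The main obstacle is precisely this last step: the dichotomy $(4,4,6)$ versus $(4,5,5,5)$ cannot be settled from $(d,\tau,{\rm mdr})$ alone --- this is exactly the subtlety that produces Ziegler-type pairs --- so one is forced to compute an actual piece of the syzygy module; and because the graded Betti numbers of $M(Q_r)$ can only jump upward under specialization, one must also check that the $\mathbb{Q}(r)$-computation does not degenerate for any admissible $r$, i.e.\ that its locus of failure is contained in the excluded set $\{0,\pm1,\pm\iota\}$.
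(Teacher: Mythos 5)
Your proposal is correct and, in its decisive step, lands exactly where the paper does: the dichotomy for the resolution of $M(Q_r)$ cannot be settled from $(d,\tau,{\rm mdr})$ alone, and one must compute the syzygy module over $\mathbb{Q}(r)$ with \texttt{SINGULAR}. Where you genuinely diverge is the combinatorial half. The paper establishes $\mathcal{K}_{\mathcal{C}_r}=(4;4,10)$ by computing a Gr\"obner cover of the parametric Jacobian ideal and reading off the degenerate parameter values, whereas you prove it by hand: since all four quadrics are diagonal, each pairwise intersection lies on the degenerate conic $q_i-q_j$ (a double coordinate line or a pair of lines), and a direct check gives ten tacnodes and four nodes, with the excluded set $\{0,\pm1,\pm\iota\}$ emerging naturally (e.g.\ $r\neq\pm\iota$ is exactly the transversality condition for the pair $q_2,q_4$ along $x=\pm y$). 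This is more self-contained and more illuminating than the black-box Gr\"obner computation, and your Kummer-cover description $\phi^{-1}(\mathcal{L})$ correctly places the family in the Megyesi circle of examples. Your reduction of the final computation to a single number, $\dim_{\mathbb{C}}AR(Q_r)_4\in\{1,2\}$, is a nice sharpening, and your remark that upper semicontinuity of Betti numbers forces one to check that the $\mathbb{Q}(r)$-computation does not degenerate at admissible special values is a point the paper itself passes over for the \texttt{syz} computation (it only treats the combinatorial degeneration via the Gr\"obner cover), so you are if anything more careful here. One small slip: with the paper's normalization of the resolution the correct linear identity is $\sum_j e_j-\sum_i d_i=(m-3)(d-1)$, not $\sum_i d_i-\sum_j e_j=d-1$ (and the quadratic identity is likewise stated in a shifted normalization); this does not affect your conclusion, since both candidate resolutions $(4,4,6)$ and $(4,5,5,5)$ do satisfy the correct identities with $\tau=34$, but the formulas as written should be fixed before use.
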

\begin{proof}
First, we want to detect the admissible parameters $r$, which are the values of $r$ such that the arrangements $\mathcal{C}_r$ have the same combinatorics (i.e., $n_{2} = 4$ and $t_{3} = 10$). To do so, we can use the Gr\"obner basis methods to compute the Gr\"obner cover of the Jacobian ideal $J_r = \langle \partial_x Q_r, \partial_{y} Q_r, \partial_{z} Q_r \rangle$ following the lines of \cite[pp. 99]{Montes}, and for that purpose we can use the following \verb}SINGULAR} routine.
\begin{verbatim}
option(noloadLib);
LIB "all.lib";
proc gr(){
ring R=(0,r), (x,y,z),dp;
option(noredefine); 
short =0;
poly f = (x2+y2-z2)*(x2+r^2*y2-r^2*z2)*(x2+y2-r^2*z2)*(r^2*x2+y2-r^2*z2);
ideal J=jacob(f);
grobcov(J,"showhom",1);
};
\end{verbatim}
Using the above script we can verify that for each parameter $r \in \{0, \pm 1, \pm \iota\}$ the arrangement $\mathcal{C}_{r}$ degenerates from the expected combinatorics, and this completes the first step of our proof. For the second part, we have to compute the minimal free resolution of the Milnor algebra $M_{r} = S / J_{Q_r}$, and we can perform this computation using the following \verb}SINGULAR} routine.
\begin{verbatim}
ring R = (0,r), (x,y,z), (c,dp);
poly f = (x2+y2-z2)*(x2+r^2*y2-r^2*z2)*(x2+y2-r^2*z2)*(r^2*x2+y2-r^2*z2);
ideal I = jacob(f);
syz(I);
\end{verbatim}

Based on \verb}SINGULAR} computations, we can see that for every admissible $r$ the arrangement $\mathcal{C}_{r}$ is $4$-syzygy with exponents $(4,5,5,5)$ and this completes our proof.
\end{proof}
\begin{remark}
One can observe that conic arrangements described in Proposition \ref{con4} are called, according to \cite[Definition 1.2]{her}, curves of type $2B$, i.e., these are $4$-syzygy curves such that the first two exponents satisfy $d_{1}+d_{2} = {\rm deg}(\mathcal{C}_{r})+1 = 9$. 
\end{remark}
We would like to propose the following difficult classification problem.
\begin{problem}
Is it true that every arrangement of $4$ conics with $n_{2}=4$ and $t_{3}=10$ is $4$-syzygy?
\end{problem}

Finally, let us consider the case with $\nu(\mathcal{C}) = 4$. We have $33=\tau(\mathcal{C})=n_{2}+3t_{3}$ and $24 = n_{2}+2t_{3}$, so the unique admissible weak-combinatorics has the form $(k_{2};n_{2},t_{3}) = (4;6,9)$. Similarly to the situation of arrangements of $k=4$ conics with $n_{2}=4$ and $t_{3}=10$, we made several computational experiments and in all cases we observed that considered arrangements with $\mathcal{K}_{\mathcal{C}} = (k_{2};n_{2},t_{3}) = (4;6,9)$ are $5$-syzygy curves with exponents $(d_{1}, \ldots ,d_{5}) = (5, \ldots, 5)$, so these are, according to \cite[Definition 1.2]{her}, curves of type $3$.

Using the same strategy as in Proposition \ref{con4} we can prove the following.
\begin{proposition}
Consider the following one-parameter family of $4$ conics $\mathcal{T}_{r} \subset \mathbb{P}^{2}_{\mathbb{C}}$ given by
$$Q_{r}(x,y,z) = (x^2+y^2+4rxz)(x^2+y^2-4rxz)(x^2+3y^2-18r^2z^2)(x^2+3y^2-16r^2z^2)$$
with $r \in \mathbb{C} \setminus \{0\}$. Then for every admissible $r$ the arrangement $\mathcal{T}_{r}$ is $5$-syzygy with exponents $(d_{1}, \ldots, d_{5}) = (5, \ldots, 5)$ and $\mathcal{K}_{\mathcal{T}_{r}}=(k_{2};n_{2},t_{3})=(4;6,9)$.
\end{proposition}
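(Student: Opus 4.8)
The plan is to carry out the same two-step argument as in the proof of Proposition~\ref{con4}. In the first step I would check that the pair $(n_{2},t_{3})=(6,9)$ is constant over the whole admissible locus $r\in\mathbb{C}\setminus\{0\}$; in the second step I would compute the minimal graded free resolution of the Milnor algebra $M_{r}=S/J_{r}$ over the function field $\mathbb{Q}(r)$ and read off that $\mathcal{T}_{r}$ is $5$-syzygy with exponents $(5,5,5,5,5)$.

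For the combinatorial step, note that the four conics split into two pairs, the ``circles'' $x^{2}+y^{2}\pm 4rxz=0$ and the ``ellipses'' $x^{2}+3y^{2}-cr^{2}z^{2}=0$ with $c\in\{16,18\}$, and that the four conics are smooth and pairwise distinct precisely when $r\neq 0$ (as one sees from their symmetric $3\times 3$ matrices). One can then locate the singular locus by hand: eliminating one variable between the two members of any of the six pairs and using that the resulting binary form factors into linear forms exhibits the tangencies explicitly. For instance $C_{1}\cap C_{3}$ consists of the two tacnodes $[-3r:\pm\sqrt{3}\,r:1]$, while $C_{1}\cap C_{4}$ consists of the tacnode $[-4r:0:1]$ together with the two nodes $[-2r:\pm 2r:1]$; the involution $z\mapsto -z$ interchanges the two circles and fixes the two ellipses, so it produces the intersection data for the remaining pairs $C_{2}\cap C_{3}$, $C_{2}\cap C_{4}$, while $C_{1}\cap C_{2}$ contributes the tacnode $[0:0:1]$ and the two nodes $[1:\pm\iota:0]$, and $C_{3}\cap C_{4}$ contributes two tacnodes on the line $z=0$. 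Summing up yields $9$ tacnodes and $6$ nodes, and one verifies that these $15$ points are pairwise distinct and each lies on exactly two conics for every $r\neq 0$; equivalently, and more systematically, following \cite[pp.~99]{Montes} one computes the Gr\"obner cover of $J_{r}=\langle \partial_{x}Q_{r},\partial_{y}Q_{r},\partial_{z}Q_{r}\rangle$ and reads off that its generic segment is exactly $\{r\neq 0\}$, so that $\mathcal{K}_{\mathcal{T}_{r}}=(4;6,9)$ throughout. This is consistent with the numerics $2(k^{2}-k)=24=n_{2}+2t_{3}$ and $\tau(\mathcal{T}_{r})=33=n_{2}+3t_{3}$, the latter being the value of $\tau$ forced by $\nu(\mathcal{C})=4$.

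For the homological step I would compute the minimal free resolution of $M_{r}$ over $\mathbb{Q}(r)$ with \texttt{SINGULAR}, using iterated \texttt{syz} exactly as in Proposition~\ref{con4}, and obtain a $5$-syzygy resolution with all exponents equal to $5$. The delicate point — and the main obstacle — is uniformity in the parameter: a computation over $\mathbb{Q}(r)$ a priori only governs those specializations that avoid the denominators occurring in the reduced Gr\"obner basis. This is precisely what the Gr\"obner cover from the combinatorial step resolves: once one knows that the reduced Gr\"obner basis of $J_{r}$ has constant leading-term structure for all $r\neq 0$, every graded Betti number of $M_{r}$ is independent of the admissible parameter, and hence the exponents $(5,5,5,5,5)$ found generically are the exponents for every $r\in\mathbb{C}\setminus\{0\}$. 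Finally, since $d_{1}+d_{2}=10={\rm deg}(\mathcal{T}_{r})+2$, the arrangement $\mathcal{T}_{r}$ is a curve of type $3$ in the sense of \cite[Definition 1.2]{her}, as anticipated in the discussion preceding the statement.
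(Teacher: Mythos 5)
Your proposal is correct and follows essentially the same strategy the paper invokes (the paper gives no details beyond ``use the same strategy as Proposition~\ref{con4}''): a Gr\"obner-cover check that the combinatorics $(n_{2},t_{3})=(6,9)$ is constant on $r\neq 0$, followed by a \texttt{SINGULAR} computation of the minimal resolution of $M_{r}$ over $\mathbb{Q}(r)$. Your explicit hand-verification of the intersection pattern (the points $[0:0:1]$, $[1:\pm\iota:0]$, $[\pm 3r:\pm\sqrt{3}r:1]$, $[\pm 4r:0:1]$, $[\pm 2r:\pm 2r:1]$ and the two tacnodes at infinity of $C_{3}\cap C_{4}$, totalling $9$ tacnodes and $6$ nodes) checks out and is a worthwhile supplement, as is your remark on why the generic resolution specializes to every admissible $r$.
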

Finishing this section, we can propose another classification problem.
\begin{problem}
Is it true that every arrangement of $4$ conics with $n_{2}=6$ and $t_{3}=9$ is $5$-syzygy?
\end{problem}
\section{Strong Ziegler pairs}
In the last part of our paper we would like to focus on the simplest possible constructions of strong Ziegler pairs of plane curves. Let us recall that this notion is strictly motivated by a famous example of Ziegler, which is based on two line arrangements having the same intersection lattices (so the same strong combinatorics), but different ${\rm AR}$ modules. This notion was then generalized to reduced plane curves via the weak combinatorics by Cuntz and the third author \cite{CP}. For a given curve $C : f=0$ in $\mathbb{P}^{2}_{\mathbb{C}}$ we will use the notation ${\rm AR}(C)$ or ${\rm AR}(f)$ interchangeably.
\begin{definition}
Let $C_{1}, C_{2} \subset \mathbb{P}^{2}_{\mathbb{C}}$ be reduced curves. We say that $C_{1}, C_{2}$ form a weak Ziegler pair if $\mathcal{K}_{C_{1}} = \mathcal{K}_{C_{2}}$, but the Milnor algebras associated with these two curves have different minimal free resolutions, which is equivalent to say that the modules ${\rm AR}(C_{1})$ and ${\rm AR}(C_{2})$ are different.
\end{definition}
Now, following the ideas of Cuntz and the third author from \cite{CP}, we define a \textit{strong Ziegler pair of curves}.
\begin{definition}
Let $C_{1}, C_{2} \subset \mathbb{P}^{2}_{\mathbb{C}}$ be reduced curves. We say that $C_{1}, C_{2}$ form a strong Ziegler pair if the combinatorial data $W_{C_{1}}$ and $ W_{C_{2}}$ of these curves are equivalent, but the modules ${\rm AR}(C_{1})$ and ${\rm AR}(C_{2})$ are different.
\end{definition}

Let us now look at arrangements of conics with nodes and tacnodes. We start with a plus--one generated conic arrangement $\mathcal{C}$ such that
$$\mathcal{K}_{\mathcal{C}} = (k_{2};n_{2},t_{3}) = (3;4,4)$$
having the following symmetric defining equation
$$Q(x,y,z) = (x^2 + y^{2} - z^{2})(3x^{2}+y^{2}-3z^{2})(x^{2}+3y^{2}-3z^{2}).$$
Now we are going to add some special lines to the above conic arrangement. Define
$$L_{1} : y-x-2z=0, \quad L_{2} : y+x+2z = 0, \quad L_{3} : y-x+2z=0,$$
and consider two conic-line arrangements
$$\mathcal{C}_{1} = \mathcal{C} \cup L_{1} \cup L_{2} \text{ and } \mathcal{C}_{2} = \mathcal{C} \cup L_{1} \cup L_{3}.$$
 
\begin{proposition}
The combinatorial types $W_{\mathcal{C}_{1}}$ and $W_{\mathcal{C}_{2}}$ of arrangements $\mathcal{C}_{1}$ and $\mathcal{C}_{2}$ are equivalent.
\end{proposition}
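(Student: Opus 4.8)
The plan is to establish the equivalence of the combinatorial data by brute force: list the singular points of $\mathcal{C}_{1}$ and of $\mathcal{C}_{2}$ together with their topological types and the components passing through them, and then check that the obvious component bijection $\phi_{r}$ --- which fixes $C_{0},C_{1},C_{2},L_{1}$ and sends $L_{2}\mapsto L_{3}$ --- transports one list onto the other. Note that $\phi_{r}$ is automatically degree--preserving, since both arrangements consist of the same three conics and of two lines. I emphasize that a genuine projective equivalence of $\mathcal{C}_{1}$ and $\mathcal{C}_{2}$ is \emph{not} available --- these two arrangements are built precisely so as to form a strong Ziegler pair, hence to have distinct modules ${\rm AR}$ --- so the computation really has to be carried out by hand.

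First I would record the singularities of the common subarrangement $\mathcal{C}\cup L_{1}$. Intersecting $C_{0}:x^{2}+y^{2}-z^{2}=0$, $C_{1}:3x^{2}+y^{2}-3z^{2}=0$ and $C_{2}:x^{2}+3y^{2}-3z^{2}=0$ pairwise, one sees that $\mathcal{C}$ has two tacnodes on $C_{0}\cap C_{1}$, two tacnodes on $C_{0}\cap C_{2}$ and four nodes on $C_{1}\cap C_{2}$, so $\mathcal{K}_{\mathcal{C}}=(3;4,4)$ as asserted. Substituting $y=x+2z$ into the three conic equations, the resulting binary quadratic forms are $2x^{2}+4xz+3z^{2}$, $(2x+z)^{2}$ and $(2x+3z)^{2}$; hence $L_{1}$ meets $C_{0}$ transversally in two points and is tangent to $C_{1}$ at $(-1:3:2)$ and to $C_{2}$ at $(-3:1:2)$. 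One then checks that $L_{1}$ passes through none of the eight singular points of $\mathcal{C}$, that $(-1:3:2)$ and $(-3:1:2)$ lie on no conic other than the one to which $L_{1}$ is tangent there, and that the two points of $L_{1}\cap C_{0}$ are distinct and lie on neither $C_{1}$ nor $C_{2}$. Thus $\mathcal{C}\cup L_{1}$ has exactly six tacnodes and six nodes.

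Next I would add $L_{2}$, and then $L_{3}$, and repeat the substitution. For $L_{2}:y=-x-2z$ the three forms are again $2x^{2}+4xz+3z^{2}$, $(2x+z)^{2}$, $(2x+3z)^{2}$, so $L_{2}$ meets $C_{0}$ in two nodes and is tangent to $C_{1}$ at $(-1:-3:2)$ and to $C_{2}$ at $(-3:-1:2)$; moreover $L_{2}$ meets $L_{1}$ in the single point $(-2:0:1)$, which lies on no conic, and (by the same sort of non--vanishing checks) $L_{2}$ avoids every singular point of $\mathcal{C}\cup L_{1}$ and creates no further coincidence among the newly produced points. Hence $\mathcal{C}_{1}$ has precisely eight tacnodes and nine nodes, distributed as: two tacnodes on $C_{0}C_{1}$, two on $C_{0}C_{2}$, one tacnode on each of $L_{1}C_{1},L_{1}C_{2},L_{2}C_{1},L_{2}C_{2}$; four nodes on $C_{1}C_{2}$, two on $L_{1}C_{0}$, two on $L_{2}C_{0}$, and one on $L_{1}L_{2}$. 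The verbatim same analysis for $L_{3}:y=x-2z$ (tangent to $C_{1}$ at $(1:-3:2)$ and to $C_{2}$ at $(3:-1:2)$, meeting $C_{0}$ in two nodes and $L_{1}$ in the node $(1:1:0)$, and disjoint from ${\rm Sing}(\mathcal{C}\cup L_{1})$) shows that $\mathcal{C}_{2}$ has eight tacnodes and nine nodes with the incidence list obtained from the one above by replacing $L_{2}$ by $L_{3}$ throughout.

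It remains to read off the equivalence. The set of topological types occurring is $\{\text{node},\text{tacnode}\}$ in both cases; the two incidence lists are literally interchanged by $\phi_{r}$, so there is a bijection $\phi_{S}$ between the two singular loci preserving the topological type $\delta$ and satisfying $\iota_{1}(p)=\iota_{2}(\phi_{S}(p))$; and a direct count shows that in each of $\mathcal{C}_{1},\mathcal{C}_{2}$ every conic contains exactly eight singular points and each of the two lines contains exactly five, so the vectors $\textbf{r}$ agree. Therefore $W_{\mathcal{C}_{1}}$ and $W_{\mathcal{C}_{2}}$ are equivalent. The only part of the argument requiring care is the negative bookkeeping --- that no added line runs through a previously existing singular point, that no two newly created singular points collide, and that every tangency contributes a plain tacnode rather than a higher contact; each of these amounts to a single explicit polynomial being nonzero, which can be verified directly or confirmed with \verb}SINGULAR}.
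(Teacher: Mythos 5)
Your proof is correct and follows the same route as the paper, which simply asserts that ``a simple inspection, based on the figure, shows that the combinatorial types are identical''; you have merely carried out that inspection explicitly in coordinates (and your tangency points, intersection counts, the distribution of the $8$ tacnodes and $9$ nodes, and the vector $\textbf{r}$ with $r_5=2$, $r_8=3$ all check out). The explicit bookkeeping — verifying that neither added line passes through a pre-existing singular point and that all tangencies are simple — is exactly the content the paper delegates to the figure, so nothing further is needed.
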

\begin{proof}
The arrangements admit only nodes and tacnodes as singularities. A simple inspection, based on the figure below, shows that the combinatorial types of curves $\mathcal{C}_{1}$ and $\mathcal{C}_{2}$ are identical. 
\vskip0.5cm
\begin{figure}[h!]
\begin{minipage}{0.45\textwidth}
\centering
    \begin{tikzpicture}[scale=0.6]
\draw[thick] (0,0) circle (1);
\draw[thick] ellipse (2 and 1);
\draw[thick] ellipse (1 and 2);
\draw[thick] (1,-3.2360679774997896964)--(-3.2360679774997896964,1);
\draw[thick] (-3.2360679774997896964,-1)--(1,3.2360679774997896964);
\end{tikzpicture}
\end{minipage} 
\hfill
\begin{minipage}{0.45\textwidth}
\centering
\begin{tikzpicture}[scale=0.6]
\draw[thick] (0,0) circle (1);
\draw[thick] ellipse (2 and 1);
\draw[thick] ellipse (1 and 2);
\draw[thick] (-1,-3.2360679774997896964)--(3.2360679774997896964,1);
\draw[thick] (-3.2360679774997896964,-1)--(1,3.2360679774997896964);
\end{tikzpicture}
\end{minipage}
\label{gr}
\caption{Geometric realizations of arrangements $\mathcal{C}_{1}$ and $\mathcal{C}_{2}$.}
\end{figure}
\end{proof}
As shown in Figure \ref{gr}, the two arrangements differ in principle in the position of one node. In the second case, this node is sent to infinity. While we do not have a rigorous proof, we believe this difference is decisive.
\begin{proposition}
\label{zzzz}
The arrangements $\mathcal{C}_{1}$ and $\mathcal{C}_{2}$ form a strong Ziegler pair.
\end{proposition}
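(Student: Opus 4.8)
By the previous proposition, the combinatorial data $W_{\mathcal{C}_{1}}$ and $W_{\mathcal{C}_{2}}$ are equivalent, so in view of the definition of a strong Ziegler pair it remains only to check that the derivation modules ${\rm AR}(\mathcal{C}_{1})$ and ${\rm AR}(\mathcal{C}_{2})$ are not isomorphic as graded $S$-modules, equivalently that the Milnor algebras $M(f_{i}) = S/J_{f_{i}}$ admit different minimal graded free resolutions. The plan is to treat this as a direct computation. Writing
$$f_{1} = Q\cdot(y-x-2z)(y+x+2z), \qquad f_{2} = Q\cdot(y-x-2z)(y-x+2z)$$
for the two reduced defining polynomials, both of degree $d = 8$, I would compute the minimal graded free resolution of each $M(f_{i})$; from it one reads off whether $\mathcal{C}_{i}$ is free, nearly free, plus--one generated, or an $m$-syzygy curve for some larger $m$, together with the exponents $(d_{1},\dots,d_{m})$. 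Since the minimal free resolution of a finitely generated graded module is unique up to graded isomorphism, two distinct Betti tables immediately yield ${\rm AR}(\mathcal{C}_{1}) \not\cong {\rm AR}(\mathcal{C}_{2})$.

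Concretely, I would first record the data shared by the two arrangements and forced by their common combinatorial type: both carry only nodes and tacnodes, all of which are quasi--homogeneous, so $\tau(\mathcal{C}_{1}) = \tau(\mathcal{C}_{2})$, this common value being the sum of the Milnor numbers $1$ per node and $3$ per tacnode over the (identical) singularity list. Consequently $M(f_{1})$ and $M(f_{2})$ have the same Hilbert function, so any discrepancy between the resolutions must occur in the graded Betti numbers of the syzygy part. Next I would compute, over $\mathbb{Q}[x,y,z]$, the module ${\rm AR}(f_{i})$ and a minimal resolution of $M(f_{i})$ for $i = 1,2$ -- in \verb}SINGULAR} this amounts to \verb}syz(jacob(f))} followed by a call to \verb}mres} -- and compare the two outputs. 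Since all coefficients involved are rational and extension of scalars to $\mathbb{C}$ is flat, the graded Betti numbers computed over $\mathbb{Q}$ agree with those over $\mathbb{C}$, so the comparison is legitimate. Reading off the exponents of $\mathcal{C}_{1}$ and $\mathcal{C}_{2}$ and observing that they differ -- for instance that the two curves are $m$-syzygy for different values of $m$, or that ${\rm mdr}(f_{1}) \neq {\rm mdr}(f_{2})$ -- then completes the argument together with the preceding proposition.

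The step I expect to be the real hurdle is not any single calculation but making the comparison airtight, in two respects. First, one must be sure that each $f_{i}$ genuinely has the claimed structure -- reduced of degree $8$, with $Q$ a product of three distinct smooth conics and the two added linear forms distinct -- since only then does the Jacobian-ideal computation describe ${\rm AR}(\mathcal{C}_{i})$; here this is immediate from the explicit equations, but it should be noted. Second, and more essentially, the content of the proposition is precisely that the two resolutions \emph{do} differ: were the graded Betti tables by bad luck to coincide, the existence of a Ziegler pair would have to be established by comparing the modules ${\rm AR}(\mathcal{C}_{1})$ and ${\rm AR}(\mathcal{C}_{2})$ directly, which is a genuinely subtler task. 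The computation above is what rules this out, producing resolutions that already disagree at the coarsest level, so that no such refined module comparison is needed.
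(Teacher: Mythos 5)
Your proposal follows essentially the same route as the paper: invoke the preceding proposition for the equivalence of the combinatorial data, then compute the minimal graded free resolutions of the two Milnor algebras in \verb}SINGULAR} and observe that they differ. The paper simply records the two resulting resolutions (which indeed disagree already in ${\rm mdr}$ and in the number of generators of ${\rm AR}$, as you anticipated), so your plan, once the computation is actually carried out, reproduces the paper's argument.
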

\begin{proof}
Since the curves have the same combinatorial type our proof essentially comes down to showing that the minimal resolutions of the associated Milnor algebras differ. Using \verb}SINGULAR} we can compute the following minimal resolutions:.
\begin{enumerate}
\item[$\mathcal{C}_{1}$] :  $S^{3}(-13) \rightarrow S^{5}(-12) \rightarrow S^{3}(-7) \rightarrow S,$
and
\item[$\mathcal{C}_{2}$] : $S(-14) \oplus S(-13) \rightarrow S(-13) \oplus S^{2}(-12)\oplus S(-11) \rightarrow S^{3}(-7) \rightarrow S,$
\end{enumerate}
which shows that curves $\mathcal{C}_{1}$ and $\mathcal{C}_{2}$ form a strong Ziegler pair.
\end{proof}
It is natural to wonder whether the above example is the smallest possible degree-wise arrangement of conics and lines that form a strong Ziegler pair. We performed computational experiments leading us to the following problem.
\begin{problem}
Is it true that there does not exist any strong Ziegler pairs of $k\leq 2$ conics and $2$ lines with only nodes and tacnodes as singularities?
\end{problem}
Finishing our paper, let us formulate our final question devoted to our Ziegler pair of conics and lines.
\begin{problem}
Can we identify the exact geometric feature that distinguishes our conic-line arrangements in terms of the associated exponents?
\end{problem}
\begin{remark}
After completing the paper, we discovered that our pair of conics and lines appears in a completely different context of Zariski pairs \cite{am}. Recall that Zariski pairs consist of pairs of curves with the same combinatorial type but distinct (embedded) topologies. This connection is incredibly intriguing, and we look forward to exploring this subject further in the near future.
\end{remark}
\section*{Acknowledgments}
We would like to thank an anonymous referee for suggestions, which improved the presentation of our work. We would also like to thank Professors Bannai and Tokunaga for drawing our attention to \cite{am}.

Artur Bromboszcz and Piotr Pokora are supported by the National Science Centre (Poland) Sonata Bis Grant  \textbf{2023/50/E/ST1/00025}. For the purpose of Open Access, the authors have applied a CC-BY public copyright license to any Author Accepted Manuscript (AAM) version arising from this submission.

\bigskip
Affiliation of all authors:
\noindent
Department of Mathematics,
University of the National Education Commission Krakow,
Podchor\c a\.zych 2,
PL-30-084 Krak\'ow, Poland. \\
\nopagebreak
\vskip0.25cm
\noindent
Artur Bromboszcz: \texttt{artur.bromboszcz@uken.krakow.pl}\\
Bartosz Jaros\l awski: \texttt{s168826@student.uken.krakow.pl} \\
Piotr Pokora: \texttt{piotr.pokora@uken.krakow.pl}

\end{document}